\newcommand{\jac}{\mathrm{Jac(\mathcal C)}}
\newcommand{\eq}[1]{\begin{equation}#1 \end{equation}}
\newcommand{\eqsp}[1]{\begin{equation} \begin{split}
 #1\end{split}\end{equation}}
\newcommand{\eqst}[1]{\begin{equation*}#1\end{equation*}}
\theoremstyle{plain}
\newtheorem{theorem}{Theorem}[section]
\newtheorem*{theorem*}{Theorem}
\newtheorem{proposition}{Proposition}[section]
\newtheorem{lemma}{Lemma}[section]
\newtheorem{corollary}{Corollary}[section]
\newtheorem{example}{Example}[section]
\newtheorem{question}{Question}[section]
\newtheorem{remark}{Remark}[section]
\newtheorem{definition}{Definition}[section]
\numberwithin{equation}{section}
\begin{document}
\author{Bora Yalkinoglu} 
\address{CNRS and IRMA, 7 rue René-Descartes, 67100 Strasbourg}
\email{yalkinoglu@math.unistra.fr}
\date{\today}
\title{Arithmetic aspects of discrete periodic Toda flows}
\maketitle

\begin{abstract}
\noindent
We construct a new algebraic linearization of the discrete periodic Toda flow by using Mumford's algebraic description of the Jacobian of a hyperelliptic curve. In particular, the discrete periodic Toda flow can be expressed in terms of the famous Gauß composition law for quadratic forms adapted to the framework of hyperelliptic curves by Cantor. One remarkable consequence of our approach is a new integrality property for the discrete periodic Toda flow. This leads, in particular, to a $p$-adic description of the closely related periodic box-ball flow, which has very surprising connections to number theory.
\end{abstract}

\section{Introduction}

\subsection{Motivation}
\label{sec-motivation}
The periodic box--ball system is an integrable cellular automaton introduced by Tokihiro and Yura \cite{yura}.
It evolves configurations of $N>2$ boxes arranged on a circle, where each box is either empty (0) or contains a ball (1),
under the constraint that the total number of balls is $<N/2$. Thus the phase space is
\[
\mathcal V_N=\Bigl\{y\in\{0,1\}^N \ \Big|\ \sum_{i=1}^N y_i < \tfrac{N}{2}\Bigr\},
\]
and the time evolution $\mathcal B:\mathcal V_N\to \mathcal V_N$ moves each ball to the nearest empty box to the right, with occupied boxes forbidden.
Since $\mathcal V_N$ is finite and $\mathcal B$ is invertible, every state $y\in \mathcal V_N$ is periodic; we write $\mathrm{per}(y)\ge 1$ for the minimal period.

\begin{figure}[ht]
  \centering
  \includegraphics[width=0.7\linewidth]{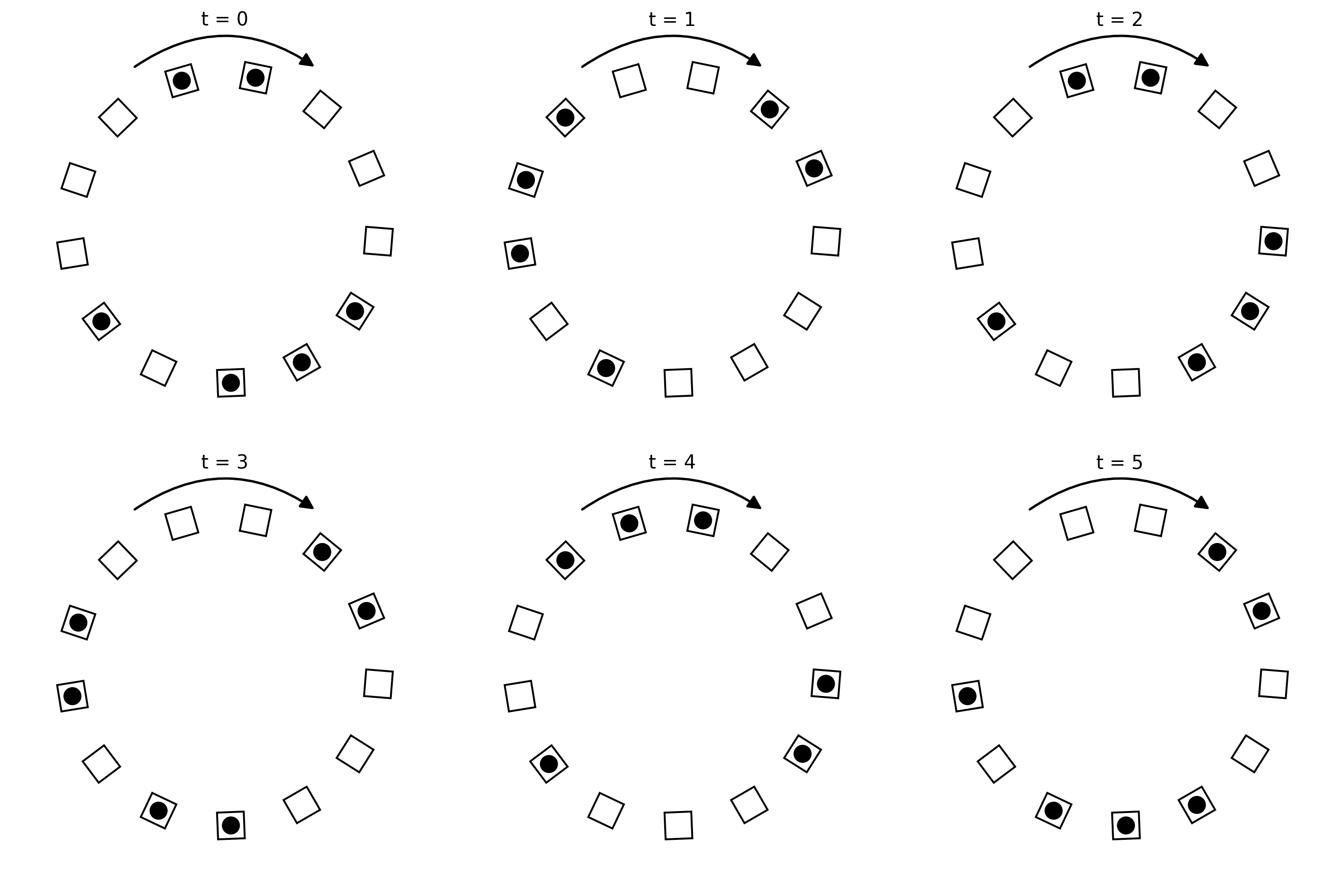}
  \caption{Evolution of the state $y_{13} = 1100011000101 \in \mathcal V _{13}$ with $\mathrm{per}(y_{13}) = 273$.}
  \label{pbb_pic}
\end{figure}
\noindent A remarkable discovery of Tokihiro and Mada shows that the distribution of these periods is linked to deep number theory:
roughly speaking, for each $N$ one can find a state whose period grows like $\exp(\psi(N))$, where $\psi(k) = \mathrm{log}(\mathrm{lcm}(1,2,\dots,k))$ is the Chebyshev's famous second function.
As explained in \cite{riemann}, this viewpoint allows one to formulate the Riemann hypothesis in terms of asymptotic properties
of the periodic box--ball dynamics. \\ \\ 
\noindent 
These connections motivate a systematic study of the geometric and arithmetic structures underlying periodic box--ball systems.
A key bridge is provided by the tropical periodic Toda system, which is closely related to the box--ball system and admits a lift
to the discrete periodic Toda system. In this paper we therefore study the discrete periodic Toda flow and show how the periodic box--ball
flow can be recovered from it.

\subsection{Overview of results.}
The main idea of this paper is to construct a new \emph{algebraic} linearization of the discrete periodic Toda flow using
Mumford's description of the Jacobian $\jac$ of the associated hyperelliptic spectral curve $\mathcal C$.
The group law on $\jac$ is implemented by Cantor's adaptation of Gau{\ss}' composition law for quadratic forms to the
hyperelliptic setting. \\
\noindent
The core technical input is the eigenvector (or algebraic Abel-Jacobi) map
\[
\Psi_C : \mathcal R_\mathcal C \longrightarrow \jac,
\]
which assigns to a Toda state on the isolevel set $\mathcal R_\mathcal C$ a Mumford representation in $\mathrm{Jac}(\mathcal C)$.
While it is classical that discrete periodic Toda flows linearize on Jacobians (typically via $\theta$-functions on the analytic Jacobian),
our approach stays entirely algebraic and yields several new structural and arithmetic consequences:

\begin{itemize}
\item[]
  \item \textbf{Explicit inverse Abel--Jacobi map.}
  We obtain a transparent algebraic description of $\Psi_\mathcal C^{-1}$ (\textbf{\hyperref[inversepsi]{Proposition~\ref*{inversepsi}}}), giving explicit recovery formulas for Toda variables
  from a Mumford representation.
\item[] 
  \item \textbf{Cyclic shift $\sigma$ and torsion.}
  The cyclic shift map $\sigma$ on the Toda phase space corresponds under $\Psi_\mathcal C$ to translation by a distinguished torsion point
  $\mathfrak{pa}\in \mathrm{Jac}(\mathcal C)$ arising from a fundamental solution of the Pell--Abel equation of $\mathcal C$ (\textbf{\hyperref[torsionpoint]{Theorem~\ref*{torsionpoint}}}).
\item []
  \item \textbf{Linearization of the discrete Toda flow.}
  On $\mathrm{Jac}(C)$ the Toda time evolution is translation by an element $\mathfrak T\in \mathrm{Jac}(\mathcal C)$ (\textbf{\hyperref[lineartime]{Theorem~\ref*{lineartime}}}),
  giving a concrete description of the discrete periodic Toda flow in terms of the Gau{\ss}--Cantor composition law.
\item[]
\item \textbf{Discrete Toda flow via $\sigma$.} The discrete periodic Toda flow can be expressed in terms of the cyclic shift $\sigma$ (\textbf{\hyperref[cyclictime]{Proposition \ref*{cyclictime}}}).
\item[]
  \item \textbf{Why $\mathbb{Q}(q)$ is natural.}
  Although Toda flows are traditionally studied over $\mathbb{R}$ or $\mathbb{C}$, our framework highlights that working over the rational
  function field $\mathbb{Q}(q)$ is much more natural and fundamental. In particular, via the $q$-adic valuation this setting recovers both the tropical
  periodic Toda flow and the periodic box--ball flow (\textbf{\hyperref[reformulation]{Proposition~\ref*{reformulation}}} \& \textbf{\hyperref[maintheorem2]{Theorem~\ref*{maintheorem2}}}).
\item[]
  \item \textbf{Integrality and $p$-adic Toda flow.}
  A common belief in the field of integrable systems is that the discrete periodic Toda flow is only defined over a field. \\
  Surprisingly, we prove that the discrete periodic Toda flow is defined over local rings such as $\mathbb{Z}[q]_{(q)}$ (\textbf{\hyperref[integrality]{Theorem~\ref*{integrality}}}).
  Specializing $q\mapsto p$ then yields $p$-adic Toda flows and recovers the periodic box--ball flow via the $p$-adic valuation (\textbf{\hyperref[padic]{Corollary~\ref*{padic}}}).
  \item[]
\end{itemize}

\noindent Finally, this arithmetic viewpoint suggests that $p$-adic methods (for instance those used in \cite{eid}) may help extend Cantor's division-polynomial theory from imaginary hyperelliptic curves to the real hyperelliptic curves that naturally occur as Toda spectral curves (cf., \cite{yalkinoglu}).

\subsection{Organization of the paper.}
Section~2 introduces the discrete periodic Toda flow, the associated spectral curve, Mumford’s algebraic Jacobian and the
Gau{\ss}--Cantor composition law, and develops the eigenvector map $\Psi_\mathcal C$ together with its inverse and its compatibilities
(shift, involutions, and time evolution).

\noindent Section~3 explains how periodic box--ball and tropical periodic Toda dynamics are recovered from the discrete periodic Toda flow,
emphasizing the role of $\mathbb{Q}(q)$ and valuations.

\noindent Section~4 proves the integrality result and derives $p$-adic consequences, and Section~5 collects questions and outlook.
Appendix~A relates our approach to the monodromy-matrix viewpoint.
Appendix~B reviews Cantor’s division polynomials (in the imaginary case) and explains how they can be used to compute multiples
of the translation element on $\mathrm{Jac}(\mathcal C)$ efficiently.
Appendix~C discusses a tropical analogue of Mumford’s Jacobian,
including a tropicalized inverse Abel--Jacobi map and the induced tropical linearization picture. \\ \\
As our paper relates two different areas of research (integrable systems and number theory) we tried to make the paper accessible
to researchers from both fields. Further, we have checked all computations with the help of SageMath.

\subsection{Acknowledgements} We wish to express our sincere gratitude to the (anonymous) referees for carefully reading the manuscript and for their valuable comments, which led to considerable improvements.
\newpage
\subsection{Notation and conventions.}
Our paper involves several layers of objects (Toda states, Jacobi matrices, spectral curves, Jacobians, and their tropical counterparts).
To keep symbols unambiguous we fix our conventions once and for all and collect the most frequently used notation in the following table.
\begin{table}[H]
\centering
\scriptsize
\setlength{\tabcolsep}{3.5pt}
\renewcommand{\arraystretch}{1.02}
\begin{tabular}{p{0.26\textwidth} p{0.66\textwidth} p{0.08\textwidth}}
\hline
Symbol & Meaning & First \\
\hline

\multicolumn{3}{l}{\textbf{General conventions}}\\
$\mathbb{N}$ & $\{0,1,2,\dots\}$ &  \\
$n$ & size of Toda system & \hyperref[sec-dptoda]{\S\ref*{sec-dptoda}} \\
$N$ & number of boxes in the periodic box--ball system & \hyperref[sec-motivation]{\S\ref*{sec-motivation}} \\
$R\subset K$ & base ring\footnotemark (char.~$0$) with fraction field $K$; key examples include fields: $\mathbb{R},\mathbb{Q}(q)$ and local rings: $\mathbb{Z}[q]_{(q)},\mathbb{Z}_p$ & \hyperref[sec-dptoda]{\S\ref*{sec-dptoda}} \\
\hline

\multicolumn{3}{l}{\textbf{Linear algebra}}\\
$M\in R^{n\times n}$ & matrix with entries in $R$ & \hyperref[sec-jacobi]{\S\ref*{sec-jacobi}} \\
$\mathcal M = M + xE_n$ & characteristic matrix of $M$ (with $E_n$ the unit matrix) & \hyperref[sec-spectral]{\S\ref*{sec-spectral}} \\
$\vert M \vert \in R$, $\vert \mathcal M \vert \in R[x]$ & determinant of $M$ and $\mathcal M$ & \hyperref[sec-spectral]{\S\ref*{sec-spectral}} \\
$M_{i,j} \in R^{(n-1)\times (n-1)}$& minor: delete the $i$-th row and $j$-th column & \hyperref[sec-spectral]{\S\ref*{sec-spectral}} \\
${}^i M _j \in R^{(n-i-j)\times(n-i-j)}$  & submatrix: delete first $i$ rows/cols and last $j$ rows/cols & \hyperref[sec-spectral]{\S\ref*{sec-spectral}} \\
$M_{j},\ {}^{i}\!M$ & abbreviations $M_j={}^{0}\!M_j$ and ${}^{i}\!M={}^{i}\!M_{0}$ & \hyperref[lemmakeymr]{Lem.~\ref*{lemmakeymr}} \\
$p_x$ & evaluation at $x=0$: $p_x=p(0)$ for $p\in R[x]$ & \hyperref[remark-hx]{Rem.~\ref*{remark-hx}} \\
$f|_{I_j=0}$, $f|_{V_j=0}$ & set $I_j=0$ in $f\in R[I_1,\dots,I_n,V_1,\dots,V_n][x]$ (similarly $f|_{V_j=0}$) & \hyperref[rellist]{Lem.~\ref*{rellist}} \\
$f_{I_j},\ f_{V_j}$ & shorthand for $f|_{I_j=0}$ and $f|_{V_j=0}$ & \hyperref[rellist]{Lem.~\ref*{rellist}} \\
\hline

\multicolumn{3}{l}{\textbf{Discrete periodic Toda}}\\
$(\bold I^t,\bold V^t)\in R^{2n}$ & Toda state at time $t$, with components $I_i^t,V_i^t$; indices are periodic $i = i+n$ & \hyperref[sec-dptoda]{\S\ref*{sec-dptoda}} \\
$\mathcal T : R^{2n}\dashrightarrow R^{2n}$ & Toda time flow/step: $(\bold I^{t+1},\bold V^{t+1}) = \mathcal T(\bold I^t,\bold V^t)$ & \hyperref[dpTflow]{(\ref*{dpTflow})} \\
$\sigma :R^{2n}\to R^{2n}$ & cyclic shift on indices $i\mapsto i+1$ (with $i = i+n$) and induced action: $(\bold I ^{t,\sigma},\bold V ^{t,\sigma}) = \sigma(\bold I ^t,\bold V ^t)$ & \hyperref[cyclicshift]{Def.~\ref*{cyclicshift}} \\
${}^\dagger :R^{2n}\to R^{2n}$ & dagger involution and induced action: $(\bold I ^{t,\dagger},\bold V ^{t,\dagger}) = (\bold I ^t,\bold V ^t)^\dagger$ & \hyperref[daggerinvolution]{Def.~\ref*{daggerinvolution}} \\
$z$ & invertible spectral parameter in $L_t$ & \hyperref[sec-jacobi]{\S\ref*{sec-jacobi}} \\
$L_t$, $\overline L_t$, $L^\sigma_t$, $L^\dagger_t \in R(z)^{n\times n}$ & periodic Jacobi/Lax matrices associated to \hspace{0cm}Toda states $(\bold I^t,\bold V^t)$, $(\bold I^{t+1},\bold V^{t+1})$, $(\bold I ^{t,\sigma},\bold V ^{t,\sigma}) $, $(\bold I^{t,\dagger}, \bold V^{t,\dagger})$ & \hyperref[sec-jacobi]{\S\ref*{sec-jacobi}} \\
$\mathcal L_t$, $\overline {\mathcal L}_t$, $\mathcal L_t^\sigma$, $\mathcal L_t^\dagger \in R(z)[x]^{n\times n }$ & characteristic matrices of $L_t$, $\overline L_t$, $L^\sigma_t$, $L_t^\dagger$ & \hyperref[sec-spectral]{\S\ref*{sec-spectral}} \\
$\sigma_{i,j}$, $s_i$, $t_i$ & involutions $R^{2n}\to R^{2n}$ on Toda states & \hyperref[involutions]{Def.~\ref*{involutions}} \\
\hline

\multicolumn{3}{l}{\textbf{Spectral curve and invariants}}\\
$\mathcal C = z \vert \mathcal L_t \vert$ 
& spectral curve $z^2+h(x)z+f=0$ of genus $g=n-1$ (Toda flow invariant)& \hyperref[sec-spectral]{\S\ref*{sec-spectral}} \\
$h(x)\in R[x],\ f\in R$ & coefficients defining $\mathcal C$ (Toda flow invariants)& \hyperref[sec-spectral]{\S\ref*{sec-spectral}} \\
$I_\Pi,\ V_\Pi$ & Toda flow invariants $I_\Pi=\prod_{i=1}^n I_i^t$, $V_\Pi=\prod_{i=1}^n V_i^t$ & \hyperref[sec-spectral]{\S\ref*{sec-spectral}} \\
$u^t$, $v^t$, $w^t \in R[x]$ & polynomials extracted from $\mathcal L_t$ & \hyperref[definitionuvw]{Def.~\ref*{definitionuvw}} \\
$\overline u^t$, $\overline v^t$, $\overline w^t \in R[x]$ & polynomials extracted from $\overline {\mathcal L}_t=\mathcal L_{t+1}$ & \hyperref[definitionuvw]{Def.~\ref*{definitionuvw}} \\
$u^{t,\sigma}$, $v^{t,\sigma}$, $w^{t,\sigma} \in R[x]$ & polynomials extracted from $\mathcal L_t^\sigma$ & \hyperref[definitionuvwsd]{Def.~\ref*{definitionuvwsd}} \\
$u^{t,\dagger}$, $v^{t,\dagger}$, $w^{t,\dagger} \in R[x]$ & polynomials extracted from $\mathcal L_t^\dagger$ & \hyperref[definitionuvwsd]{Def.~\ref*{definitionuvwsd}} \\
\hline

\multicolumn{3}{l}{\textbf{Linearization on Mumford Jacobian}}\\
$\jac$ & Jacobian of $\mathcal C$; elements written in balanced Mumford form $([P,Q],d)$ & \hyperref[sec-mumford]{\S\ref*{sec-mumford}} \\
$[P,Q]$ & Mumford representation (with $P\mid(Q^2+hQ+f)$ and $\deg P\le g$) & \hyperref[sec-mumford-repr]{\S\ref*{sec-mumford-repr}} \\
$\boxplus$ & addition on $\jac$ via Gau{\ss}--Cantor composition law & \hyperref[gausscomp]{\S\ref*{gausscomp}} \\
$\mathcal R _\mathcal C \subset R^{2n}$ & isolevel set of Toda states with fixed spectral curve $\mathcal C$ & \hyperref[isolevelset]{Def.~\ref*{isolevelset}} \\
$\Psi_\mathcal C: \mathcal R_\mathcal C\to\mathrm{Jac}(\mathcal C)$ & eigenvector (or algebraic Abel-Jacobi) map $(\bold I ^t,\bold V ^t)\mapsto([u^t,v^t],0)$ & \hyperref[defspectral]{Def.~\ref*{defspectral}} \\
$\mathfrak T\in\mathrm{Jac}(C)$ & translation element inducing Toda time flow on $\mathrm{Jac}(C)$ & \hyperref[lineartime]{Thm.~\ref*{lineartime}} \\
$\mathfrak{pa} \in \jac$ & Pell--Abel torsion point $[\infty_+-\infty_-]\in\mathrm{Jac}(\mathcal C)$, used to describe action of $\sigma$ and ${}^\dagger$ on $\jac$ & \hyperref[sec-mumford-examples]{\S\ref*{sec-mumford-examples}} \\
\hline

\multicolumn{3}{l}{\textbf{Tropical periodic Toda / periodic box--ball}}\\
$\mathcal B:\mathcal V_N\to \mathcal V_N$ & periodic box--ball flow on $\mathcal V_N\subset\{0,1\}^N$. & \hyperref[sec-motivation]{\S\ref*{sec-motivation}} \\
$\mathcal T^{\mathrm{trop}} : \mathbb R ^{2n} \dashrightarrow \mathbb R ^{2n} $ & tropical periodic Toda flow on $\mathbb R^{2n}$ (often restricted to $\mathbb N^{2n}$)& \hyperref[sec-trop-toda]{\S\ref*{sec-trop-toda}} \\
$\eta : \mathcal V_N \longrightarrow \bigsqcup _{n=1}^{\lfloor N/2 \rfloor}\mathbb N^{2n} $ & embedding from box--ball states into tropical Toda states. & \hyperref[sec-boxball-trop]{\S\ref*{sec-boxball-trop}} \\
$\mathfrak I_q:\mathbb N^{2n}\to\mathbb Q(q)^{2n}$ & (algebraic) lift from tropical to discrete periodic Toda & \hyperref[q-lift]{(\ref*{q-lift})} \\
$\mathcal T_p : \mathbb Z_p ^{2n} \dashrightarrow \mathbb Z_p ^{2n} $ & $p$-adic periodic Toda flow on $\mathbb Z_p ^{2n}$& \hyperref[sec-trop-toda]{\S\ref*{sec-trop-toda}} \\
\hline
\end{tabular}
\captionsetup{font=small}
\caption{List of most frequently used symbols and conventions.}
\end{table}
\vspace{-4pt}
\footnotetext{One can safely assume that $R$ is always one of the aforementioned rings.}
\noindent From Section~2 onward we often suppress the time superscript $t$ on $(\bold I ^t, \bold V^t)$ and on the associated objects, e.g., $L_t,u^t,v^t,w^t$, when no confusion can arise.
Whenever a statement compares different times (typically $t$ and $t+1$), we write the time superscripts explicitly or equivalently use $u = u^t$ and $\overline u=u^{t+1}$.

\section{Linearization of the discrete periodic Toda flow}
\subsection{The discrete periodic Toda flow}
\label{sec-dptoda}
The discrete periodic Toda flow over $R$ (for $n > 1$) is the rational map on Toda states \eqst{\mathcal T : R^{2n} \dashrightarrow R^{2n},} \eqst{ (\bold I ^t,\bold V^t) = (I_1^t,\dots,I_n^t,V_1^t,\dots,V_n^t) \longmapsto ( \bold I ^{t+1}, \bold V ^{t+1}),} defined, for $i \in \{1,..,n\}$ and $t \in \mathbb N$, by the rules 

\eq{
\label{dpTflow}
\begin{split}
I^{t+1}_i &= I_i^t+V_i^t-V^{t+1}_{i-1}, \\
V_i^{t+1} &= \frac{I^t_{i+1} V_i^t}{I_i^{t+1}},
\end{split}
}
with periodic boundary conditions
\eq{\label{periodicbc} I_{i+n}^t = I_i^t \text{ and } V_{i+n}^t = V_i^t.}
We always assume that \eqst{0 \neq \prod_{i=1}^n I^t_i \neq \prod_{i=1}^n V^t_i \neq 0 .}
Under the above assumption, one can show (cf., \cite{tokihiro1})
\eqst{ I^{t+1}_i = I^{t}_{i+1}\tfrac{I_i^t I_{i-1}^t\cdots I^t_{i+2} + V^t_i I^t_{i-1}\cdots I^t_{i+2}+\dots+V^t_iV^t_{i-1}\cdots V^t_{i+2}}{I_{i-1}^tI_{i-2}^t\cdots I_{i+1}^t + V^t_{i-1}I^t_{i-2}\cdots I^t_{i+1}+\dots+V^t_{i-1} V^t_{i-2}\cdots V^t_{i+1}}} and \eqst{V^{t+1}_i = V_i^t \tfrac{I_{i-1}^tI_{i-2}^t\cdots I_{i+1}^t + V^t_{i-1}I^t_{i-2}\cdots I^t_{i+1}+\dots+V^t_{i-1} V^t_{i-2}\cdots V^t_{i+1}}{I_i^t I_{i-1}^t\cdots I^t_{i+2} + V^t_i I^t_{i-1}\cdots I^t_{i+2}+\dots+V^t_iV^t_{i-1}\cdots V^t_{i+2}}.} 
\begin{remark}
This also follows easily from our new inverse algebraic Abel-Jacobi map, using Proposition \ref{inversepsi}.
\end{remark}

\subsection{Jacobi matrices}
\label{sec-jacobi}
A more elegant and natural description (cf., \cite{spectrumjacobi,tokihiro1,iwao}) of the phase space $R^{2n}$ of the discrete periodic Toda flow is given by the space of periodic Jacobi matrices \eqst{\mathrm{Jac}_n^{\mathrm{per}}(R) = \left\{L_t=\begin{bmatrix}I_1^t + V_n^t  & 1&  & (-1)^{n-1} I_n^t V_n^t / z \\ I_1^t V_1^t& I_2^t + V_1^t & \ddots & \\ & \ddots & \ddots & 1 \\ (-1)^{n-1}z & &I_{n-1}^t V_{n-1}^t &I_n^t+V_{n-1}^t \end{bmatrix}\right\},} where $z$ is an invertible spectral parameter\footnote{The reason for our chosen sign normalization will become clear later on.}. \\
Of central importance is the decomposition \eqst{L_t = M_tR_t,}
with the matrices \eqst{M_t = \begin{bmatrix} 1 &  & & (-1)^{n-1} V_n^t / z \\ V_1^t & 1 & & \\ & \ddots & \ddots & \\ & & V_{n-1}^t & 1 \end{bmatrix}} and \eqst{R_t = \begin{bmatrix} I_1^t & 1 & & \\ & I_2^t & \ddots & \\ & & \ddots & 1 \\\ (-1)^{n-1}z & & & I_n^t \end{bmatrix}.} On the phase space $\mathrm{Jac}_n^{\mathrm{per}}(R)$ the discrete periodic Toda flow is equivalent to the fundamental matrix equation \eqst{ L_{t+1} = R_t M_t} or, equivalently, in isospectral form \eq{\label{isospec} {L}_{t+1}  = R_t L_t R_t^{-1}.}
It will be important to keep in mind the explicit shape of $L_t$ 
and \eqst{\overline L _{t} = {L}_{t+1} = \begin{bmatrix} I_1^t + V_1^t  & 1&  & (-1)^{n-1}I_1^t V_n^t / z \\ I_2^t V_1^t& I_2^t + V_2^t & \ddots & \\ & \ddots & \ddots & 1 \\ (-1)^{n-1}z & &I_{n}^t V_{n-1}^t &I_n^t+V_{n}^t \end{bmatrix}.} 

\begin{remark}
From now on we will usually suppress the time parameter $t$. Unless stated otherwise, $I_i$ and $V_j$ always mean $I_i^t$ and $V_j^t$ and so on.
\end{remark}
\begin{remark}
We will use the coordinates $(\bold{I},\bold{V})$ and the corresponding matrix $L$ interchangeably. 
\end{remark}
\subsection{The spectral curve}
\label{sec-spectral}
With $L \in \mathrm{Jac}_n^{\mathrm{per}}(R)$ 
we associate the spectral curve $\mathcal C = \mathcal C_L$ 
defined by the (normalized) characteristic polynomial \eqst{\mathcal C : z \, \vert \mathcal L \vert =  z \, \vert L_t + x E_n \vert = z^2 + h_n(x) z + f_n(x) = 0.} From (\ref{isospec}) we see that the spectral curve is independent of the time flow, i.e., the polynomials $h(x) = h_n(x)$ and $f(x)=f_n(x)$ are invariant under the time flow. Moreover, it directly follows that the quantities \eqst{I_{\Pi} = \prod_{i=1}^n I_i^t \ \ \  \text{ and } \ \ \  V_{\Pi} = \prod_{i=1}^n V_i^t} are also invariant under the time flow. \begin{remark}We always assume that the spectral curve $\mathcal C$ is non-degenerate. 
\end{remark}
\noindent By expanding the last row of the determinant $\vert \mathcal L \vert$ we find \eqst{\vert \mathcal L \vert = (I_n+V_{n-1}+x)\vert \mathcal L_1 \vert-I_{n-1}V_{n-1}\vert\mathcal L _{n-1,n}\vert + z \vert \mathcal L_{1,n}\vert.} Expanding further \eqst{\vert\mathcal L_{n-1,n}\vert = \vert \mathcal L_2 \vert - (I_nV_n/z) (I_1V_1 \cdots I_{n-2}V_{n-2})} and \eqst{\vert\mathcal L _{1,n}\vert = 1-(I_nV_n/z) \vert {}^{1}\mathcal L_1 \vert,} we find: \eqst{h(x) = (I_n+V_{n-1} +x) \vert \mathcal L_1 \vert - I_n V_n \vert ^1 \mathcal L _1 \vert - I_{n-1} V_{n-1} \vert \mathcal L _ 2 \vert  \in R[x]} and \eqst{f = \prod_{i=1}^n I_i V_i \in R.}  
\begin{remark}
\label{remark-hx}
Observe that $h_x = h(0) = I_{\Pi}+V_{\Pi}$ .
\end{remark}
\noindent The following definition will be central to the present work.
\begin{definition}
\label{definitionuvw}
Define the polynomials $u,v,w,\overline u, \overline v, \overline w \in R[x]$ by 

\[
\begin{aligned}
&u  = \vert \mathcal L _1 \vert, 
&v  =  I^t_n V^t_n \vert {}^1\mathcal L _1 \vert, \ \ \ 
&w = I^t_{n-1}V^t_{n-1} \vert \mathcal L _ 2 \vert \\
&\overline u = \vert \overline{\mathcal L} _1 \vert, 
&\overline v =   I^t_1 V^t_n \vert {}^1\overline{\mathcal L} _1 \vert , \ \ \ 
&\overline w = I^t_{n}V^t_{n-1} \vert \overline{ \mathcal L} _ 2 \vert .
\end{aligned}
\]
\end{definition}
\noindent Now we can write \eq{\label{definitionh}\text{{$h = (I_n+V_{n-1}+x)u-v-w = (I_n+V_n+x)\overline u - \overline v - \overline w = \overline h $}}.}

\noindent The following simple facts will be used throughout the paper.
\begin{lemma} 
\label{lemmakeymr} 
Let $k,l \in \mathbb N$ and assume $l+k \geq 1$. Then we have 
\begin{align*}
\vert{}^k\mathcal L _l\vert &= (I_{k+1}+V_k +x)\vert{}^{k+1}\mathcal L _l\vert-I_{k+1}V_{k+1} \vert{}^{k+2}\mathcal L_l\vert, \\
\vert{}^k\mathcal L _l\vert &= (I_{n-l}+V_{n-l-1} +x)\vert{}^{k}\mathcal L _{l+1}\vert-I_{n-l-1}V_{n-l-1} \vert{}^{k}\mathcal L_{l+2}\vert,\\
\vert{}^k\overline{\mathcal {L}} _l\vert &= (I_{k+1}+V_{k+1} +x)\vert{}^{k+1}\overline{\mathcal {L}} _l\vert-I_{k+2}V_{k+1} \vert{}^{k+2}\overline{\mathcal {L}}_l\vert\\
\vert{}^k\overline{\mathcal {L}} _l\vert &= (I_{n-l}+V_{n-l} +x)\vert{}^{k}\overline{\mathcal {L}}_{l+1}\vert-I_{n-l}V_{n-l-1} \vert{}^{k}\overline{\mathcal {L}}_{l+2}\vert.
\end{align*}
\end{lemma}
\begin{proof}
Those are simply the expansions formulas for determinants of tridiagonal matrices for the first and last row.
\end{proof}

\subsubsection{Different models of $\mathcal C$}
\noindent Hyperelliptic curves $\mathcal C$ arising from the discrete periodic Toda flow are real hyperelliptic curves, i.e., they have two points at infinity, which we denote by $\infty^+$ and $\infty^-$. 

\subsubsection{Standard model $\widetilde {\mathcal C}$}
The standard model $\widetilde{\mathcal C}$ is defined by \eqst{\tilde {\mathcal C} : y^2 = F(x) = h(x)^2-4 f } and is obtained from $\mathcal C$ by the birational transformation \eqst{(x,z) \longmapsto (x,2 z + h(x)) = (x,y).}
In particular, we see that $\mathcal C$ has genus \eqst{g = \mathrm{deg}(h(x))-1= n-1.} 
\subsubsection{Imaginary model $\mathcal C _\alpha$}
Passing from a real model $\widetilde{\mathcal C}$ to an imaginary model ${\mathcal C} _\alpha$ depends on the choice $\alpha$ of a root\footnote{We always assume that $F(x)$ has pairwise distinct roots. Generically this is true.} of $F(x) = \prod_{i=1}^{2g+2} (x-\alpha_i)$, and is defined over an extension of $R$. Given such a root $\alpha$, set $\beta^2 = \prod_{i=1,\alpha_i \neq \alpha}^{2g+2} (\alpha -\alpha_i)$, then the birational transformation \eqst{(x,y) \longmapsto (\tfrac{1}{x-\alpha},\tfrac{y}{\beta (x-\alpha)^{g+1}}) = (x_\alpha, y_\alpha)} 
defines the imaginary model \eqst{{\mathcal C}_\alpha :  y_\alpha ^2 = F_\alpha(x_\alpha)} with $F_\alpha$ monic and $\mathrm{deg}(F_\alpha) = 2g +1$. The curve $\mathcal C _\alpha$ has exactly one point at infinity.
\subsection{The cyclic shift and some involutions}

\begin{definition}
\label{cyclicshift}
The cyclic shift map $\sigma$ is defined by \eqst{\sigma: (I_i,V_i) \longmapsto (I_{i+1},V_{i+1}),} with the usual convention (\ref{periodicbc}). We write $(\bold I, \bold V) \mapsto (\bold I ^\sigma, \bold V ^\sigma)$.
\end{definition}
\begin{lemma}
\label{baslem}
The spectral curve $\mathcal C$ is invariant under the cyclic shift map $\sigma$, i.e.,  \eqst{h^\sigma = h \text{ and }f^\sigma = f.}
\end{lemma}
\begin{proof}
We need to show that \eqst{h = h^\sigma = (I_1+V_n+x)\vert \mathcal L^\sigma_1\vert-I_1V_1 \vert{}^1\mathcal L^\sigma_1\vert - I_nV_n \vert \mathcal L^\sigma_2\vert.}
This follows from $\vert \mathcal L^\sigma_1\vert = \vert {^1\mathcal L \vert}$, $\vert {}^1 \mathcal L^\sigma_1 \vert = \vert{}^2 \mathcal L \vert$ and $\vert \mathcal L^\sigma_2\vert = \vert {}^1 \mathcal L _1 \vert$ in combination with Lemma \ref{lemmakeymr}.
\end{proof}
\begin{definition}
\label{daggerinvolution}
The dagger involution ${}^\dagger :R^{2n}\longrightarrow R^{2n}$ is defined by \eqst{{}^\dagger : (I_1,\dots,I_n,V_1,\dots,V_n) \mapsto (V_{n-1},\dots,V_1,V_n,I_{n-1},\dots,I_1,I_n).}
We write $(\bold I,\bold V) \mapsto (\bold I ^\dagger,\bold V ^\dagger)$.
\end{definition}
\begin{lemma}
The spectral curve is invariant under the dagger involution ${}^\dagger$, i.e.,  \eqst{h^\dagger = h \text{ and }f^\dagger = f.}
\end{lemma}
\begin{proof}
This follows from Lemma \ref{rellist} in combination with Lemma \ref{baslem}
\end{proof}

\begin{definition}
\label{definitionuvwsd}
Define the polynomials $u^\sigma,v^\sigma,w^\sigma,u^\dagger,v^\dagger,w^\dagger \in R[x]$ by
\[
\begin{aligned}
&u^\sigma  = \vert \mathcal L^\sigma _1 \vert, 
&v^\sigma =  I^t_n V^t_n \vert {}^1\mathcal L^\sigma _1 \vert, \ \ \ 
&w^\sigma =  I^t_{n-1}V^t_{n-1} \vert \mathcal L^\sigma _ 2 \vert \\
&u^\dagger = \vert \mathcal L ^\dagger _1 \vert, 
&v^\dagger = I^t_n V^t_n \vert {}^1\mathcal L^\dagger _1 \vert , \ \ \ 
&w^\dagger =  I^t_{1}V^t_{1} \vert \mathcal L^\dagger _ 2 \vert .
\end{aligned}
\]
\end{definition}

\noindent We will also need the following involutions
\begin{definition}
\label{involutions}
For $i, j \in \{1,\dots,n\}$, with $i < j$, we define the involution \eqst{\sigma_{i,j}:I_i \leftrightarrow I_j, V_i \leftrightarrow V_j.}
Further we define the involutions \eqst{s_i : I_j \mapsto \left\{ \begin{array}{c}  -I_i\text{, if $j=i$} \\  \  I_j \text{, if $j\neq i$}  \end{array} \right.} and 
\eqst{t_i : V_j \mapsto \left\{ \begin{array}{c}  -V_i \text{, if $j=i$} \\  \  V_j \text{, if $j\neq i$}  \end{array} .\right.} 

\end{definition}

\subsection{Some useful observations}

\begin{lemma}
\label{rellist}
The following relations hold

\[
\begin{aligned}
u&= \tfrac{h_{I_n,V_{n-1}}}{x}, & v &= I_n V_n \tfrac{u_{I_1,V_n}}{ x} , & w &= I_{n-1} V_{n-1} \tfrac{u_{I_{n-1},V_{n-2}}}{ x},  \\ 
\overline u &= \tfrac{h_{I_n,V_n}}{x},  &\overline{v} &= I_1 V_n \tfrac{\overline u _{I_1,V_1}}{ x} , & \overline w &= I_n V_{n-1} \tfrac{\overline u _{I_{n-1},V_{n-1}}}{ x}, \\ 
u^\sigma &= \tfrac{h_{I_1,V_n}}{ x}  , &v^\sigma &= I_1 V_1 \tfrac{u^\sigma_{I_2,V_1}}x , & w^\sigma &= I_n V_n \tfrac{u^\sigma_{I_n,V_{n-1}}}x, \\ 
u^\dagger &= u^\sigma,  & {v^\dagger} &= w^\sigma, &  w^\dagger &= v^\sigma.
\end{aligned}
\]

\end{lemma}
\begin{proof}
We will only prove the first two relations. The other relations follow similarly. The first follows trivially from the definition of $h$, see (\ref{definitionh}). The second one follows from the expansion \eqst{\vert \mathcal L _1 \vert = (I_1+V_n+x) \vert {}^1 \mathcal L _1 \vert - I_1V_1 \vert {}^2 \mathcal L _1 \vert.}
\end{proof}
\begin{corollary}
\label{easycor}
We have \eqst{w^\sigma = v.}
\end{corollary}
\begin{remark}
Setting $I_i = 0$ above Lemma can be expressed in terms of the involution $s_i$, namely we have \eqst{u_{I_i} = \tfrac 1 2 (u+u^{s_i}).} The same for $V_i$ and $t_i$. The reason is simply that each variable $I_i$ and $V_i$ appears at most linearly in $u$.
\end{remark}

\begin{lemma}
\label{subfree}
The polynomials \eqst{h,u,v,w,\overline u, \overline v, \overline w,u^\sigma,v^\sigma,w^\sigma,u^\dagger,v^\dagger,w^\dagger \in R[x]} are all subtraction-free.
\end{lemma}
\begin{proof}
From Lemma \ref{rellist} we see that it is enough to show that $h$ is subtraction-free. From the formula \begin{align*}h &= (I_n+V_{n-1}+x) u - v - w \\ &=x u + I_n u_{V_n}+V_{n-1}u_{I_{n-1}} \end{align*} we only need to show that $u = \vert \mathcal L _1 \vert$ is subtraction-free. This follows from the tridiagonal structure of $\vert \mathcal L _1 \vert$ (and Lemma \ref{lemmakeymr}), as one can immediately see that the only negative terms appearing $\vert \mathcal L _ 1 \vert$ are of the form $-I_j V_j \vert \mathcal L _{n-j+1} \vert $ and those are cancelled by the expressions $(I_j +V_{j-1}+ x) (I_{j+1}+V_j+x) \vert \mathcal L _{n-j+1} \vert$.
\end{proof}
\begin{remark}
The last Lemma is the main reason for our choice of normalization of $\mathcal C$, as subtraction-free polynomials can be nicely tropicalized. This will be used in appendix \ref{tropicalmumford}.
\end{remark}
\noindent We now come to the important 
\begin{lemma}
\label{basicrel}
We have \eqst{vw - f = I_{n-1} \,I_n \,V_{n-1} \,V_n \, u \, \vert^{1}\mathcal L _2 \vert.}
\end{lemma}
\begin{proof}
The statement is equivalent to \eqst{\vert {}^1\mathcal L_1\vert \vert \mathcal L _2 \vert - \vert \mathcal L _1\vert \vert{}^1\mathcal L _2\vert = I_1V_1 \cdots I_{n-2}V_{n-2}.} Applying Lemma \ref{lemmakeymr} to $\vert \mathcal L _2 \vert$ and $\vert \mathcal L _1\vert$ we obtain \eqst{\vert{}^2 \mathcal L _1\vert \vert{}^1 \mathcal L _2\vert - \vert{}^1 \mathcal L _1\vert \vert{}^2 \mathcal L _2\vert = I_2 V_2 \cdots I_{n-2}V_{n-2}.} The statement now follows from iterative applications of Lemma \ref{lemmakeymr} to the two factors with the lowest upper indices and the trivial identity \eqst{\vert{}^{n-2}\mathcal L_1\vert\vert{}^{n-3}\mathcal L_2\vert-\vert{}^{n-3}\mathcal L_1\vert\vert{}^{n-2}\mathcal L_2\vert = I_{n-2}V_{n-2}.}
\end{proof}

\begin{lemma}
We have\eqst{I_n V_n \vert {}^1\mathcal L _2 \vert = (I_n+V_{n-1}+x)v^{\sigma^{-1}} -u^{\sigma^{-1}}} and
\eqst{\vert{}^1\mathcal L _2\vert = \tfrac{u_{I_1,V_{n-1},V_n}}{x}.}
\end{lemma}
\begin{proof}
This follows from $\vert {}^1 \mathcal L _ 2 \vert = \vert {}^2 \mathcal L^{\sigma^{-1}} _ 1 \vert$ and Lemma \ref{lemmakeymr}.
\end{proof}
\begin{lemma}
\label{keyrelation}
The following relations hold
\begin{align*}I_n u - v &= I_n u_{V_{n}} \\ &= I_n \overline u - \overline w, \\ (V_{n-1}+x)u-w &= (V_n + x)\overline u - \overline v.\end{align*}
\end{lemma}
\begin{proof}
The first statement follows immediately from the expansions \begin{align*}u &= (I_1+V_n+x)\vert {}^1 \mathcal L_1 \vert - I_1V_1\vert{}^2\mathcal L _1 \vert, \\ \overline u &= (I_1+V_1+x) \vert{}^1\overline{\mathcal L}_1\vert-I_2V_1 \vert{}^2\overline{\mathcal L}_1\vert\end{align*} and Lemma \ref{rellist}. The second assertion follows from the first and $h = \overline h.$
\end{proof}

\begin{lemma}
\label{lemmarecursion}
We have the following recursion relations \begin{align*}
u_n &= (u_n)_{V_n}+V_n u_{n-1}^{\sigma_{n-1}}, \\
v_{n} &= I_{n}V_{n} u_{n-1}^{\sigma_{n-1}}, \\
w_{n} &= I_{n-1}V_{n-1}u_{n-1}^{\sigma_{n-1,n}}. 
\end{align*}

\end{lemma}
\begin{proof}
We start with the second relation. This follows from \eqst{ \vert {}^1\mathcal L _1 \vert = \vert\mathcal L^{\sigma_n}_2\vert = \vert (\mathcal L_{n-1}^{\sigma_{n-1}})_1 \vert.} The first relation now follows from \eqst{u_n - (u_n)_{V_n} = u_n - u_n + V_n \vert{}^1\mathcal L_1 \vert} and the second relation. The other relations follow similarly.
\end{proof}
\begin{remark}
In particular, we have \eqst{(I_n+V_{n-1}+x) \, u_n = h_n +I_nV_n u_{n-1}^{\sigma_{n-1}}+I_{n-1}V_{n-1} u_{n-1}^{\sigma_{n-1,n}}.}
\end{remark}
\begin{remark}
We have \eqst{(\overline u _ n)_{V_{n-1}} = (u_n)_{V_n} .}
\end{remark}

\subsection{Mumford's algebraic description of $\jac$}
\label{sec-mumford}
Mumford's beautiful algebraic description\footnote{See \cite{mumfordtheta} for Mumford's original treatment in the case of imaginary hyperelliptic curves.} of $\jac$, for a real hyperelliptic curve $\mathcal C : z^2 +h(x) z + f = 0$ of genus $g$ as above, is explained in (varying degrees of) detail in \cite{galbraith}, \cite{comparison} and \cite{wood}. We will mostly follow \cite{galbraith,comparison}. 
\subsubsection{Mumford's representation}
\label{sec-mumford-repr}
Note any reduced divisor $D'$ not supported at infinity can be represented by two polynomials $P(x),Q(x) \in R[x]$, such that $P$ is monic, $\mathrm{deg}(P) \leq g$ and \eqst{P \ \vert \ Q^2 + h Q + f.} 
\begin{remark}The last condition assures that for $\alpha \in \overline K$ with $P(\alpha) = 0$ we have \eqst{(\alpha,Q(\alpha)) \in \mathcal C.}
\end{remark} 
\noindent The unique Mumford representation of $D'$ is defined by \eqst{D' = [P,Q \text{ mod }P] .} 
\subsubsection{All of $\jac$}
We introduce the divisor $D_\infty = \lceil \frac g 2 \rceil \infty ^+ + \lfloor \frac g 2 \rfloor \infty^-$. Following \cite{galbraith}, any element of $\jac$ can be uniquely written as difference $D - D_\infty,$ with \eqst{D = D' + d \infty^+ + (g-d-\mathrm{deg}(D'))\infty^-,} where $D'$ is a Mumford representation and $0 \leq d \leq g-\mathrm{deg}(D')$ is a natural number. This is the so-called balanced (Mumford) representation. Using the above notation, we obtain the following description \eqst{\jac = \{ ([P,Q],d) \}.} When the balancing parameter $d$ is understood, we simply write $[P,Q] = ([P,Q],d) \in \jac$.
\subsubsection{Examples}
\label{sec-mumford-examples}
The zero element is given by \eqst{[0] = ([1,0],\lceil \tfrac g 2 \rceil) \in \jac.} The torsion points \eqst{\mathfrak{pa} = [\infty^+ - \infty^-] = ([1,0],\lceil \tfrac g 2 \rceil + 1) \in \jac,} \eqst{-\mathfrak {pa} = [\infty^- - \infty^+] = ([1,0],\lceil \tfrac g 2 \rceil - 1) \in \jac} will appear prominently in the following. The element $\mathfrak {pa}$ corresponds to a fundamental solution of the famous Pell--Abel equation for $\mathcal C$, as explained in \cite{gendron}. Our framework gives a natural proof of the fact that $\mathfrak {pa} \in \jac[n]$, see Theorem \ref{torsionpoint}.

\subsection{Gauß composition and addition law in $\jac$}
\label{gausscomp}
Following \cite{galbraith} we will explain the addition law \eqst{([P_1,Q_1],d_1)\boxplus ([P_2,Q_2],d_2) = ([P,Q],d) \in \jac} in our set-up. The main ingredient is Cantor's\footnote{Originally, Cantor worked over imaginary hyperelliptic curves where all subtleties at infinity disappear, in our set-up of real hyperelliptic curves some care is initially needed.} adaptation \cite{cantorjacobian} of Gauß composition law for quadratic forms (over $\mathbb Z$) to hyperelliptic curves (see also \cite{wood}), which consists of a composition step and reduction step for the Mumford representations. We extract the part of the general algorithm for real hyperelliptic curves from \cite{galbraith} necessary for our set-up as follows.

\subsubsection*{I. Composition}
Compute polynomials $s \in R[x]$ (monic) and $f_1,f_2,f_3 \in R[x]$ such that \eqst{s = \mathrm{gcd}(P_1,P_2,Q_1+Q_2+h) = f_1 P_1 + f_2 P_2 + f_3 (Q_1+Q_2+h).}
Then the composition step $[\tilde P,\tilde Q]$ 
is defined by \eqst{ [\tilde P, \tilde Q] = [P_1 P_2 / s^2, (f_1 P_1 Q_2 + f_2 P_2 Q_1 + f_3(Q_1 Q_2 -f))/s \text{ mod } \tilde P].}
\subsubsection*{II. Reduction}
The reduction step $[P,Q] = [P_1,Q_1]\boxplus[P_2,Q_2] $ is defined by \eqst{[P,Q] = [(\tilde Q ^2+h \tilde Q + f )/\tilde P \text{ made monic}, -(\tilde Q + h) \text{ mod } P ].}
\subsubsection*{III. Composition at infinity}
We will only look at sums of the form $([u,v],0) \boxplus \mathfrak a$, where $\mathfrak a \in \{\mathfrak{pa},\mathfrak T\}$ (see (\ref{divT}) for the definition of the latter). In those two cases, the set-up works such that the result is of the form $([\overline u,\overline v],0)$ where $[\overline u ,\overline v]$ is given by the composition of the Mumford representations.
\subsubsection{Example of inverse element}
The inverse of a balanced Mumford representation is given in our set-up as follows. If $g$ is even, we have \eqst{-([P,Q],d) = ([P,-(h+Q) \text{ mod } P],g-\mathrm{deg}(P)-d).} If $g$ is odd, we have \eqst{-([P,Q],d) =  \left\{ \begin{array}{c} ([P,-(h+Q) \text{ mod } P],g-d-\mathrm{deg}(P)+1) \text{, if $d>0$} \\  \  ([P,-(h+Q) \text{ mod } P],0)\boxminus \mathfrak{pa} \text{, if $d=0$}  \end{array} . \right.}

\subsection{The eigenvector map $\Psi$ and its inverse}
The idea of the eigenvector map goes back at least to \cite{vmm}. We fix a spectral curve $\mathcal C : z \vert\mathcal L\vert =  z^2 +h(x) z + f = 0$ as above and consider the vector \eqst{\phi = \begin{bmatrix}\phi_1(x,z) \\ \vdots \\ \phi_n(x,z)\end{bmatrix},} defined by \eqst{\phi_i(x,z) = (-1)^{i} \vert \mathcal L _{i,n} \vert.} A small calculation 
shows \eqst{\mathcal L \, \phi = \begin{bmatrix} 0 \\ \vdots \\ 0 \\ (-1)^{n} \vert \mathcal L \vert\end{bmatrix} ,}i.e., on the spectral curve $\mathcal C$ the vector $\phi$ is an eigenvector of $L_t$ with eigenvalue $-x$. In particular 
we immediately see \eq{\label{equ} \phi_n(x,z) = 0 \iff u(x)=0,} \eq{\label{eqv} \phi_1(x,z) = 0 \iff v(x)  = z,}  \eqst{\phi_{n-1}(x,z) = 0 \iff w(x) = f/z.}
Before we define the eigenvector map $\Psi_\mathcal C$, we need the following
\begin{definition}
\label{isolevelset}
Let $\mathcal C$ be a spectral curve as above. The isolevel set $\mathcal R _\mathcal C$ is defined by \eqst{\mathcal R _{\mathcal C} = \{ (\bold{I},\bold{V}) \in R^{2n} \ \vert \ \mathcal C_{(\bold{I},\bold{V})} = \mathcal C \}.}  
\end{definition}
\noindent Now we can finally make the fundamental
\begin{definition}
\label{defspectral}
Let $\mathcal C$ be a spectral curve as above. The eigenvector map \eqst{\Psi_\mathcal C:\mathcal R _\mathcal C
\longrightarrow \jac} is defined by \eqst{(\bold I,\bold V)
\longmapsto ([u,v],0).}
\end{definition}

\begin{lemma} The eigenvector map $\Psi_\mathcal C$ is well-defined and injective.
\end{lemma}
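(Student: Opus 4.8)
The plan is to verify separately that $\Psi$ lands in $\jac$ (well-definedness) and that it is injective, the former being essentially a determinant computation built on Lemma~\ref{lemmakeymr} and the latter an inverse-spectral reconstruction of $L$.

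For well-definedness I would check the four conditions in \eqref{jacobian} for the pair $[u,v]$ and the integer $d$. That $u$ is monic of degree $g=n-1$ and that $\deg v = n-2 < \deg u$ follows at once from the tridiagonal shape of $\mathcal L_1$ and ${}^1\mathcal L_1$: the diagonal product contributes the leading term $(-1)^{n-1}x^{n-1}$ to $\vert\mathcal L_1\vert$ and $(-1)^{n-2}x^{n-2}$ to $\vert{}^1\mathcal L_1\vert$, so by Definition~\ref{defw} $u$ is monic and $v$ has degree $n-2$ with leading coefficient $-I_nV_n$. The inequalities governing $d\in\{n-1,n\}$ are then immediate, since $\deg P=g$ and $\deg(v^2+hv-f)\le 2n-2$ force $n-1\le d\le g+1$ and $2n-2\le 2g+2-d+\deg P$.

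The heart of well-definedness, and in my view the main obstacle, is the divisibility $u\mid(v^2+hv-f)$. I would first rewrite \eqref{defh(x)} using Definition~\ref{defw} as the clean identity $h=(I_n+V_{n-1}-x)\,u-v-w$. Substituting this into $v^2+hv-f$ collapses it to $v^2+hv-f=(I_n+V_{n-1}-x)\,uv-(vw+f)$, so modulo $u$ it suffices to prove $u\mid(vw+f)$. Writing $vw=I_{n-1}V_{n-1}I_nV_n\,\vert\mathcal L_2\vert\,\vert{}^1\mathcal L_1\vert$ and $f=-I_1V_1\cdots I_nV_n$, this reduces to the single determinantal identity $\vert{}^1\mathcal L_1\vert\,\vert\mathcal L_2\vert-I_1V_1\cdots I_{n-2}V_{n-2}=\vert\mathcal L_1\vert\,\vert{}^1\mathcal L_2\vert$, whose right-hand side is visibly a multiple of $\vert\mathcal L_1\vert$, hence of $u$. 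This is precisely the Desnanot--Jacobi (Dodgson condensation) relation for the tridiagonal $(n-1)\times(n-1)$ block $\mathcal L_1$: its principal corner minors are $\vert{}^1\mathcal L_1\vert$ and $\vert\mathcal L_2\vert$, its central minor is $\vert{}^1\mathcal L_2\vert$, and the two mixed minors are triangular with determinants $I_1V_1\cdots I_{n-2}V_{n-2}$ (the subdiagonal entries along one diagonal) and $1$ (the superdiagonal $1$'s along the other). Checking that these mixed minors are genuinely triangular, by reading off the sub- and superdiagonals of $\mathcal L_1$, is the one place demanding care.

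For injectivity I would reduce to showing that $[u,v]$ together with the isospectrally fixed curve — that is, the polynomials $h$ and $f$ — recovers the matrix $L$, hence all entries $I_i+V_{i-1}$ and $I_iV_i$. First, $I_nV_n$ is minus the leading coefficient of $v$, after which $\vert{}^1\mathcal L_1\vert=\pm v/(I_nV_n)$ is known. The pair $\bigl(\vert\mathcal L_1\vert,\vert{}^1\mathcal L_1\vert\bigr)$, the characteristic polynomial of the Jacobi block $\mathcal L_1$ together with that of its first principal minor, determines $\mathcal L_1$ uniquely by the classical inverse spectral theory for Jacobi matrices (the three-term recurrence, equivalently the continued-fraction reconstruction), yielding $I_i+V_{i-1}$ for $i\le n-1$ and $I_iV_i$ for $i\le n-2$. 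The product formula for $f$ then returns $I_{n-1}V_{n-1}$, so that $w$ is known, and comparing coefficients in $h=(I_n+V_{n-1}-x)u-v-w$ returns the remaining entry $I_n+V_{n-1}$; all entries of $L$ are thereby determined. The delicate point is the inverse-spectral step, which needs the off-diagonal entries to be nonzero — automatic over $\QT$ or generically — so that the reconstruction is unambiguous; this is exactly where Remark~\ref{remarkeigenmap} and Lemma~\ref{zerolemma} can be invoked, since knowing $u$ and $v$ fixes the two components $\phi_n=u$ and $\phi_1=1-v/z$ of the eigenvector and, through the three-term row relations of $\mathcal L$, the whole of $\phi$ and with it $L$.
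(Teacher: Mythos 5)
Your proof is correct, but it takes a genuinely different and much more self-contained route than the paper, which disposes of well-definedness in one line (the degree count plus the observation that the common zero set of $\phi_1$ and $\phi_n$ is the divisor $\{(x,v(x)) : u(x)=0\}$ on $\mathcal C$) and explicitly defers injectivity to the linearization of Theorem \ref{maintheorem}. The divisibility $u \mid v^2+hv-f$, which you rightly single out as the real content of well-definedness, is never verified algebraically at this point in the paper: it is left implicit in the geometric zero-set statement, and the identity you reduce it to, namely $\vert{}^1\mathcal L_1\vert\,\vert\mathcal L_2\vert-\vert\mathcal L_1\vert\,\vert{}^1\mathcal L_2\vert = I_1V_1\cdots I_{n-2}V_{n-2}$, reappears only later as Lemma \ref{keylemma2}, where it is proved by iterating the three-term expansion of Lemma \ref{lemmakeymr}. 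Your one-line derivation of the same identity via Desnanot--Jacobi condensation applied to the tridiagonal block $\mathcal L_1$ is a clean alternative to that induction, and it makes the well-definedness fully rigorous without any separability assumption on $u$. For injectivity you replace the paper's appeal to the flow's linearization by classical inverse spectral theory for Jacobi matrices (continued-fraction reconstruction from $\vert\mathcal L_1\vert$ and $\vert{}^1\mathcal L_1\vert$); this buys logical independence from Theorem \ref{maintheorem} --- welcome, since the proof of that theorem invokes Lemma \ref{divideunderw}, whose proof in turn rests on the divisibility established here --- at the price of the genericity hypothesis $I_iV_i\neq 0$, which is harmless over $\QT$. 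The one loose end is the final step: your reconstruction recovers the matrix $L_t(z)$, i.e.\ the quantities $I_i+V_{i-1}$ and $I_iV_i$, so strictly you have shown injectivity of $\mathrm{Jac}^{\mathrm{per}}_n(R)\to\jac$; to conclude injectivity on the tuples $(\underline I,\underline V)$ one still needs that $L_t(z)$ determines the tuple, or one restricts, as the paper's definition of $\Psi$ implicitly does, to the orbit sitting inside $\mathrm{Jac}^{\mathrm{per}}_n(R)$. This is a gap the paper's own argument shares, so it does not count against you, but it deserves a sentence.
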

\begin{proof}
The degree of $u$ is equal to $g = n-1$ and the degree of $v$ is equal to $g-1$. The condition \eqst{u \, \vert \, v^2+vh+f} follows from (\ref{eqv}) and (\ref{equ}) or we can use Lemma \ref{basicrel} (and the definition of $h$) to see \eqst{u \ \vert \ v^2+vh+f = u((I_n+V_{n-1}+x) v - I_{n-1}I_nV_{n-1}V_n \vert^{1}\mathcal L _2 \vert).}
The injectivity follows from the next proposition where we construct a (left) inverse of $\Psi$. 
\end{proof}
\noindent But first, we have the following key
\begin{lemma}
\label{invlem}
We have \eqst{I_n = \frac{v_x +I_{\Pi}}{u_x} \ \ \ \text{ and } \ \ \ V_{n-1} = \frac{w_x + V_{\Pi}}{u_x}.}
\end{lemma}
\begin{proof}
We need to show \eqst{u_x = \vert L _ 1 \vert = I_1 \cdots I_{n-1} + V_n \vert {}^1 L_1\vert.} Applying Lemma \ref{lemmakeymr} modulo $x$ to $\vert L _1 \vert$, we see \eqst{u_x = (I_1+V_n) \vert {}^1 L _1\vert - I_1 V_1 \vert {}^2 L _1\vert = I_1(\vert {}^1 L_1 \vert - V_1 \vert {}^2 L_1\vert) + V_n \vert {}^1 L_1\vert.} From iterative applications of Lemma \ref{lemmakeymr} modulo $x$ we immediately obtain \eqst{\vert{}^1L_1\vert - V_1\vert{}^2L_1\vert = I_2 \cdots I_{n-1},} thus finishing the proof of the first assertion. \\
The second assertion follows from the first together with \eqst{h_x = (I_n+V_{n-1}) u_x - v_x - w_x} and the observation $h_x = I_{\Pi}+V_{\Pi}$.
\end{proof}
\begin{proposition}[Algebraic inverse Abel-Jacobi map]
\label{inversepsi}
The eigenvector map $\Psi_\mathcal C$ has an explicit left inverse denoted by \eqst{\Psi^{-1}_\mathcal C : \mathrm{Im}(\Psi_\mathcal C)\longrightarrow \mathcal R_\mathcal C.}
\end{proposition}
\begin{proof}
Applying the cyclic shift map $\sigma^k$, for $k \in \{1,..,n\},$ to Lemma \ref{invlem} we get \eqst{I_{k} = \tfrac{\sigma^k(v)_x+I_{\Pi}}{\sigma^k(u)_x}} and\footnote{Here $\mathrm{l.c.}$ means leading coefficient.} \eqst{V_{k} = \tfrac{\mathrm{l.c.}(\sigma^k(v))}{I_k}.}
This defines our map $\Psi^{-1}_\mathcal C: [u,v] \longmapsto (\bold I,\bold V)$ and by construction we have $\Psi^{-1}_\mathcal C \circ \Psi_\mathcal C = \mathrm{Id}_{\mathcal R_\mathcal C}$. 
\end{proof}
\begin{remark}
We usually write $\Psi = \Psi_\mathcal C$.
\end{remark}
\subsubsection{Further properties of $\Psi$}
The compatibility between the eigenvector map $\Psi$ and the cyclic shift $\sigma$ is given by the following important
\begin{theorem}
\label{torsionpoint}
We have \eqst{[u^\sigma,v^\sigma] = [u,v] \boxplus \mathfrak{pa} \in \jac.}
In particular this shows \eqst{\mathfrak{pa} \in \mathrm{Jac}(\mathcal C)[n].}
\end{theorem}
\begin{proof}
Following the recipe of the Gauß composition law, we immediately see that we need to show that 
\eqst{[u^\sigma,v^\sigma] = [(v^2+v h +f) /(I_n V_n u),(I_1+V_n+x) u^\sigma-(h+v)].}
For the first equality, we recall \eqst{x u^\sigma = h_{I_1,V_n} = (I_n+V_{n-1}+x)u_{I_1,V_n}-w_{I_1,V_n}.}
But from \eqst{u = (I_1+V_n+x)\vert {}^1\mathcal L _1\vert - I_1V_1 \vert{}^2\mathcal L_1\vert} and \eqst{w = (I_1+V_n+x)\vert {}^1\mathcal L _2\vert - I_1V_1 \vert{}^2\mathcal L_2\vert} we obtain \eqst{u^\sigma = (I_n+V_{n-1}+x) \vert{}^1\mathcal L _1 \vert - I_{n-1} V_{n-1} \vert{}^1 \mathcal L _2 \vert.}
On the other hand, using (\ref{definitionh}) and Lemma \ref{basicrel} we obtain \eqst{(v^2+vh+f)/(I_nV_n u) = (I_n+V_{n-1}+x) \vert{}^1 \mathcal L _1\vert - I_{n-1}V_{n-1}\vert {}^1 \mathcal L _ 2\vert.}
For the second equality we use $h^\sigma = h$ and $w^\sigma = v$.
\end{proof}

\begin{lemma}
We have \eqst{-[u,v] =  \left\{ \begin{array}{c} [u,w] \text{ ,if $g$ even} \\  \  [u,w] \boxminus \mathfrak{pa} \text{, if $g$ odd}  \end{array} . \right.} 
\end{lemma}

\begin{lemma}
We have \eqst{[u,w] \boxplus \mathfrak{pa} = [\sigma^{-1}(u),\sigma^{-1}(w)].}
\end{lemma}
\begin{proof}
This comes from \eqst{[u,w]\boxplus \mathfrak {pa} = \boxminus [u,v] \boxplus \mathfrak{pa} = \boxminus([u,v]\boxminus \mathfrak{pa}) = \boxminus(\sigma^{-1}[u,v]) = \sigma^{-1}[u,w].}
\end{proof}
\noindent Further, we have 
\begin{lemma}
We have \eqst{[u^\dagger,v^\dagger] = {[u,w]} \boxminus \mathfrak{pa}.}
\end{lemma}
\begin{proof}
This follows from Lemma \ref{rellist} and the last Lemma.
\end{proof}

\subsection{Linearization of the discrete periodic Toda flow}
One of our main results is the following
\begin{theorem}
\label{lineartime}
Define the divisor\eq{\label{divT}\mathfrak T = ([x,-I_{\Pi}],\lceil \tfrac g 2 \rceil - 1) \in \mathrm{Jac}(\mathcal C).} Then we have \eqst{[\overline u,\overline v] = [u,v] \boxplus \mathfrak T \in \jac,} i.e., the following diagram is commutative

\begin{equation*}
\begin{tikzcd}
R^{2n} \arrow[r,"\Psi"] \arrow[d,"\mathcal T "'] &
\mathrm{Jac}(\mathcal C)  \arrow[d," \cdot  \boxplus \mathfrak T "]
\\
R^{2n} \arrow[r,"\Psi"]  &
\mathrm{Jac}(\mathcal C). 
\end{tikzcd}
\end{equation*}
\end{theorem}
\begin{proof}
Following the composition step of the Gauss composition law and using Lemma \ref{invlem}, we find that \eqst{[\tilde u, \tilde v] = [x u,-I_n u + v].} Performing the reduction step, we need to show \eqst{[\overline u,\overline v] = [(\tilde v ^2+\tilde v h + f)/(-I_n x u),(x+V_n)\overline u -(h+\tilde v)].}
Using \eqst{h = -\tilde v +(V_{n-1}+x)u-w} and Lemma \ref{basicrel}, we calculate \eqst{ \tilde v ^2 +\tilde v h +f = -I_n(V_{n-1}+x)u^2 +I_n u w + (V_{n-1}+x)uv-I_{n-1}I_nV_{n-1}V_n u \vert{}^1\mathcal L _2\vert.}
Dividing by $-I_n u$ we obtain \begin{align*}
&(\tilde v ^2 + \tilde v h + f)/(-I_n u) \\
= \ & (V_{n-1}+x)u-w-(V_{n-1}+x)v/I_n+I_{n-1}V_{n-1}V_n \vert{}^1\mathcal L _2\vert \\
  = \ & (V_{n-1}+x) u_{V_n} -w_{V_n} \\ 
 = \ &xu_{V_n} + V_{n-1}u_{I_{n-1},V_n}.
 \end{align*}
 On the other hand we have \eqst{x \overline u = h_{I_n,V_n} \overset{(\ref{definitionh})}{=} (V_{n-1}+x)u_{V_n}-w_{V_n} = xu_{V_n}+V_{n-1}u_{I_{n-1},V_n}.}
 Thus showing the first equality. The second equality follows again from Lemma \ref{basicrel} and $h = \overline h$.
\end{proof}
\begin{corollary}
The inverse flow $\mathcal T^{-1}$ is induced by  \eqst{ -\mathfrak T =  \left\{ \begin{array}{c} ([x,-V_{\Pi} ],\lfloor \tfrac g 2 \rfloor) \text{, if $g$ even} \\  \  ([x,-V_{\Pi}],\lfloor \tfrac g 2 \rfloor +1) \text{, if $g$ odd}  \end{array}  \right. \in \mathrm{Jac}(\mathcal C).} 
\end{corollary}
\subsection{Discrete periodic Toda flow via the cyclic shift map}
First we explain how the time flow can be expressed in terms of the cyclic shift $\sigma$.
\begin{proposition}
\label{cyclictime}
The following relations hold
\begin{align*}
x \overline u &= x u +V_{n-1} u_{I_{n-1}} + v - V_n u^\sigma \\
			&= h +2v - I_n u  - V_n u^\sigma, \\
\overline v &= V_n \overline u + v - V_n u^\sigma.
\end{align*}
\end{proposition}
\begin{proof}
For the first assertion we calculate 
\begin{align*} x \overline u &= (\tilde v ^2 + \tilde v h +f)/(-I_n u) \\
&= (\tilde v ((V_{n-1}+x) u - w) + f) / (-I_n u) \\
&= ((-I_n u + v) (V_{n-1}+x) u - (-I_n u +v) w + f)/(-I_n u) \\
&= (V_{n-1}+x) u - w + ((V_{n-1}+x) u v - vw + f)/(-I_n u).
\end{align*}
On the other hand we have \begin{align*}
V_n u^\sigma &= (v^2 + vh +f)/(I_nu) \\
&=  ((I_n+V_{n-1}+x)uv-vw +f) / (I_n u) \\
&= v + ((V_{n-1}+x)uv-vw+f)/(I_n u).
\end{align*}
The result now follows with the observation $V_{n-1}u_{I_{n-1}} = V_{n-1}u-w$.
The second assertion follows from Lemma \ref{keyrelation} and Corollary \ref{easycor}.
\end{proof}
\noindent We further mention the following relations (many more combinations are possible as well)
\begin{proposition}
We have
\begin{align*}
x \overline u &= h -\tilde v - \tilde v ^\dagger, \\
u^\sigma &= \overline u -(\overline v - v)/V_n = \overline u _{I_1} + v/V_n \\
&= ((I_n+x) v + V_{n-1} v_{I_{n-1}})/(I_nV_n).
\end{align*}
\end{proposition}
\begin{proof}
The proofs are similar to the last proof.
\end{proof}

\section{How to recover the periodic box-ball flow}
We want to explain how to recover the periodic box-ball system from the discrete periodic Toda system. For this we quickly recall the definitions of the periodic box-ball and tropical periodic Toda systems and explain how they are naturally related with the discrete periodic Toda system.
\subsection{Periodic box-ball flow}
The periodic box-ball system is an integrable cellular automaton living on a discrete (finite) circle of length $N \geq 3$. It has been introduced in \cite{yura}, we refer to \cite{inoue} for a very nice survey. 
Recall that the periodic box-ball flow can be viewed as the map \eqst{\mathcal  B : \mathcal V_N \longrightarrow \mathcal V_N} explained in the Introduction, where the state space is given by $\mathcal V _N = \{ y \in \{0,1\}^N \vert \sum_{i=1}^N y_i < N/2 \}$. Remarkably, as shown in \cite{riemann}, the periodic box-ball flow can naturally be used to formulate the famous Riemann hypothesis.
\begin{remark}
One can view $\mathcal V _N$ as a crystal (in the sense of Kashiwara) and show that $\mathfrak B$ can be viewed as an element in an extend affine Weyl group $\mathcal W$ acting on $\mathcal V _N$, see \cite{inoue}.
\end{remark}

\subsection{Tropical periodic Toda flow}
\label{sec-trop-toda}
The tropical periodic Toda flow can be viewed as a tropical limit of the periodic discrete Toda flow (\ref{dpTflow}), as explained in \cite{tokihiro1,inoue}. 
It is given by the map \eqst{  \mathcal T^{\mathrm{trop}} : \mathbb R ^{2n} \dashrightarrow \mathbb R^{2n},}  \eqst{ (\bold{J}^t,\bold{W}^t) =  (J_1^t,\dots,J_n^t,W_1^t,\dots,W_n^t) \longmapsto (\bold{J}^{t+1},\bold{W}^{t+1}) ,} defined, for $i \in \{1,..,n\}$, $t \in \mathbb N$, by 
\eqsp{
J^{t+1}_i &= \mathrm{min}\{W^t_i,J_i^t + X_i^t\} ,\\ 
W^{t+1}_i &= J^t_{i+1}+W^t_i-J^{t+1}_i,}
where 
\eqst{X_i^t = \underset{{k \in \{0,1,..,n-1\}}}{\mathrm{max}}\{\sum_{l=1}^k J_{i-l}^t - W_{i-l}^t\},
}
with periodic boundary conditions 
\eqst{
J^t_{i+n} = J^t_i \text{ and } W^t_{i+n} = W^t_i.}

\begin{remark}
In the following $J_i$ and $W_i$ will always mean $J_i^t$ and $W_i^t$. 
\end{remark}

\begin{remark} 
We will study the restriction of $\mathcal T ^{\mathrm{trop}}$ to $\mathbb N^{2n} \subset \mathbb R^{2n}$, which is preserved by the flow and the relevant case for the periodic box-ball flow.
\end{remark}

\subsection{From periodic box-ball to tropical periodic Toda}
\label{sec-boxball-trop}
As explained in section 6.3 of \cite{inoue} there is a natural injective map \eqst{\eta : \mathcal V_N \longrightarrow \bigsqcup _{n=1}^{\lfloor N/2 \rfloor}\mathbb N^{2n}} from the phase space of the periodic box-ball system to the disjoint union of the phase spaces of the tropical periodic Toda system. Importantly, the subset \eqst{\mathcal V_{N,n} = \{y \in \mathcal V _N \ \vert \ \eta(y) \in \mathbb N^{2n}\}} is preserved by the flow $\mathcal B$. 
\begin{example}
We explain the definition of $\eta$ with an example.
Consider the state $y_{13} = 1100011000101 \in \mathcal V _{13}$ from the Introduction. First of all $n=4$ is given by the number of connected components of $1$'s. Then we record (from left to right) the sizes of the connected components of $1$'s and $0$'s, leading to $\bold J_{y_{13}} =(2,2,1,1)$ and $\bold Q_{y_{13}} = (3,3,1,0)$. 
\end{example}
\noindent Up to rotation every 
state in $y \in \mathcal V _{N}$ can be brought into this form, where the first and last box contain a ball. \\ \\
As explained in Example 6.19 \cite{inoue} the embedding is not compatible with the two different time flows, but the map becomes compatible after we divide by the cyclic shift map (with the usual conventions) \eqst{\sigma : (J_i,W_i) \longmapsto (J_{i+1},W_{i+1}).} More precisely, we have 
\begin{proposition}[Proposition 6.20 \cite{inoue}] \label{propinoue1} The following diagram is commutative 
\eqst{\xymatrix{ \mathcal V_{N,n} \ar[rr]^{\eta} \ar[d]^{\mathcal B} && \mathbb N^{2n}{/{\langle \sigma \rangle}} \ar[d]^{\mathcal T^{\mathrm{trop}}} \\ \mathcal V_{N,n} \ar[rr]^{\eta} && \mathbb N^{2n}{/{\langle \sigma \rangle}} }}
and the induced map \eqst{\eta: \mathcal V_{N,n} \longrightarrow \mathbb N^{2n}{ /{\langle \sigma \rangle}}} remains injective.
\end{proposition}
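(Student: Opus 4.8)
The statement is Proposition~6.20 of \cite{inoue}, and the plan is to reduce both assertions to a single direct comparison, carried out on the integer data produced by $\eta$, between one pass of the carrier (the $10$-elimination) defining the periodic box-ball flow $\mathfrak B$ and one application of the ultradiscrete formulas defining $\mathfrak T^{\mathrm{trop}}$. Recall that $\eta$ attaches to a state in $\mathcal V_N$ the soliton coordinates $(\underline Q,\underline W)\in\mathbb N^{2n}$ constructed in section~6.3 of \cite{inoue}, where $n$ is the (conserved) number of solitons, $Q_i$ measures the $i$-th soliton and $W_i$ the gap in front of it, and where the labelling of the solitons requires the choice of a reference point on the circle $\mathbb Z/N$. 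This choice is precisely what the cyclic action $\sigma$ permutes, so that $\eta$ takes a canonical value only in the quotient $\mathbb N^{2n}_{/_{\sim_\sigma}}$; the injectivity of $\eta$ into $\mathbb N^{2n}$, already recorded in the text preceding the proposition, will serve as the input for the second assertion.

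For the commutativity of the diagram I would track how the carrier traverses the circle. The number of balls it holds upon reaching the $i$-th soliton equals the accumulated excess $\sum_{l\geq 1}(Q_{i-l}^t-W_{i-l}^t)$ of soliton sizes over gap sizes already traversed, and since the carrier wraps around $\mathbb Z/N$ this load is the maximum of the partial sums over how far back one reads, which is exactly $X_i^t$. The competition at the $i$-th site between the balls on board and the room $W_i^t$ available in the next gap is then recorded by $Q_i^{t+1}=\min\{W_i^t,Q_i^t+X_i^t\}$, while conservation of balls across the site yields $W_i^{t+1}=Q_{i+1}^t+W_i^t-Q_i^{t+1}$. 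Matching these rules term by term shows that $\eta\circ\mathfrak B$ and $\mathfrak T^{\mathrm{trop}}\circ\eta$ agree as tuples except that the carrier re-enters at a reference shifted by one soliton, so the two composites differ by $\sigma$ and the diagram commutes only in the quotient; this is the phenomenon isolated in Example~6.19 of \cite{inoue}.

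To obtain injectivity of the induced map $\eta:\mathcal V_N\to\mathbb N^{2n}_{/_{\sim_\sigma}}$, I would combine the injectivity of $\eta$ into $\mathbb N^{2n}$ with the claim that the image of $\eta$ meets every $\sigma$-orbit in at most one point coming from $\mathcal V_N$. Concretely, suppose two states $v,v'\in\mathcal V_N$ satisfy $\eta(v)=\sigma^k\eta(v')$ for some $k$; invoking the explicit reconstruction of a box-ball configuration from its soliton coordinates in \cite{inoue}, one shows that $\sigma$-equivalent coordinate tuples can be rebuilt to at most one configuration, whence $v=v'$. Thus distinct states have images in distinct $\sigma$-orbits, and passing to the quotient preserves injectivity.

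The main obstacle I expect is the periodic bookkeeping in the commutativity step. In the infinite box-ball system the carrier begins empty, but on the circle its starting load is not free: it must equal the global maximum defining $X_i^t$, determined self-consistently as the carrier winds once around $\mathbb Z/N$. Verifying that this winding is captured \emph{exactly} by the maximum over $k\in\{0,\dots,n-1\}$ --- neither a shorter nor a longer range --- and that the residual ambiguity in the starting block is precisely one copy of $\sigma$, so that injectivity is not destroyed by a coarser identification, is the delicate point; this is where I would lean on the detailed periodic analysis already carried out in \cite{inoue}.
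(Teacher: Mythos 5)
First, be aware that the paper does not prove this statement at all: it is imported verbatim as Proposition~6.20 of \cite{inoue}, so there is no internal argument to compare yours against. Your commutativity sketch is the standard carrier argument (track the carrier around $\mathbb Z/N$, identify its load upon reaching the $i$-th soliton with $X_i^t$, match the $\min$/conservation rules against the ultradiscrete Toda formulas, and absorb the one-soliton shift of the reference point into the quotient by $\sigma$), and this is consistent with how the result is established in \cite{inoue}. But as written it defers every genuinely delicate point --- the self-consistent determination of the carrier's initial load on the circle, the fact that the maximum over $k\in\{0,\dots,n-1\}$ is exactly the right range, and the verification that the residual ambiguity is exactly one power of $\sigma$ --- back to ``the detailed periodic analysis already carried out in \cite{inoue}''. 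That makes this half an annotated citation rather than a proof, which is acceptable here only because the paper itself treats the proposition the same way.

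The injectivity half, however, contains a genuine gap. You reduce it to the claim that ``$\sigma$-equivalent coordinate tuples can be rebuilt to at most one configuration'', but this is precisely the content of the assertion, and with $\eta$ as you describe it (recording only the soliton lengths $Q_i$ and the gap lengths $W_i$) it is false: rotating a configuration on the circle changes the state in $\mathcal V_N$ while producing a $\sigma$-shift of --- or even exactly the same --- tuple $(\underline Q,\underline W)$. For instance $100100$ and $010010$ in $\mathcal V_6$ are distinct states both giving $((1,1),(2,2))$, so lengths alone do not even give injectivity into $\mathbb N^{2n}$, let alone into the quotient. Whatever extra data makes $\eta$ injective in \cite{inoue} (positional information, a normalization of the reference point, or an implicit identification on $\mathcal V_N$) is exactly what must be exhibited and shown to survive passage to $\mathbb N^{2n}_{/_{\sim_\sigma}}$; ``invoking the explicit reconstruction'' without naming that data leaves the key step unproved.
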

\begin{remark}
In particular we can calculate the period $\mathrm{per}(y)$ in terms of $\mathcal T^{\mathrm{trop}}$.
\end{remark}
\subsection{Lifting tropical Toda to discrete Toda}

In the literature, see \cite{tokihiro1,inoue}, to a state $(\bold{J},\bold{W}) \in \mathbb N^{2n} \subset \mathbb R ^{2n}$ of the tropical periodic Toda system one attaches the state $\mathfrak I _\epsilon = (\bold{I}_\epsilon,\bold{V}_\epsilon) \in \mathbb R^{2n}$ of the discrete periodic Toda system, depending on a (positive) parameter $\epsilon \in \mathbb R$, defined by \eqst{I_{\epsilon,i} = e^{-J_i / \epsilon} \text{ and } V_{\epsilon,i} =e^{-W_i / \epsilon}.}
In the tropical limit one recovers the original state $(\bold{J},\bold{W})$ via \eqst{\lim_{\epsilon \to 0}-\epsilon \, \mathrm{log}(I_{\epsilon,i}) = J_i\text{ and } \lim_{\epsilon \to 0}-\epsilon \, \mathrm{log}(V_{\epsilon,i})=W_i.}
This lift is compatible with the different time flows, i.e., we have the following commutative diagram (see Proposition 2.1 \cite{tokihiro1}) \eqst{\xymatrix{\mathbb N^{2n} \ar[rr]^{\mathfrak I _\epsilon} \ar[d]^{\mathcal T^{\mathrm{trop}}} && \mathbb R^{2n} \ar[d]^{\mathcal T} \\ \mathbb N^{2n} \ar[rr]^{\mathfrak I_\epsilon} && \mathbb R^{2n}.}}
Instead, we prefer to consider an algebraic lift. Introduce the variable $q$ and define \eq{\label{q-lift}\mathfrak I _q : (\bold{J},\bold{W})\in\mathbb N^{2n} \longmapsto (\bold{I}_q,\bold{V}_q)\in \mathbb Q(q)^{2n}} via \eqst{I_{q,i} = q^{J_i} \text{ and } V_{q,i} = q^{W_i}.} In this set-up the tropicalization procedure corresponds to the $q$-adic valuation $\nu_q$ on $\mathbb Q(q)$, i.e., we recover $(\bold{J},\bold{W})$ from $(\bold{I}_q,\bold{V}_q)$ simply by \eqst{J_i = \nu_q(I_{q,i}) \text{ and } W_i= \nu_q(V_{q,i}).}
In particular this means the discrete periodic Toda flow over $\mathbb Q(q)$ contains the tropical periodic Toda flow via 
\begin{proposition}[Reformulation of Proposition 2.1  \cite{tokihiro1}]
\label{reformulation}
The following diagram is commutative
\eqst{\xymatrix{\mathbb N^{2n} \ar@/^1.5pc/[rrrr]^{\mathrm{id}}  \ar[d]_{\mathcal T ^{\mathrm{trop}}} \ar[rr]_{\mathfrak I _q} && \mathbb Q (q)^{2n} \ar[d]^{\mathcal T} \ar[rr]_{\nu_q} && \mathbb N^{2n} \ar[d]^{\mathcal T ^{\mathrm{trop}}} \\ \mathbb N^{2n} \ar@/_1.5pc/[rrrr]_{\mathrm{id}} \ar[rr]^{\mathfrak I _q} && \mathbb Q(q)^{2n}\ar[rr]^{\nu_q} && \mathbb N^{2n}.}}
\end{proposition}

\subsection{All together}
We define $\mathcal V_{N,\mathcal C} = \eta^{-1}(\nu_q(\mathcal R _\mathcal C))$.
Summarising everything said so far, we obtain the following description of the periodic box-ball flow in terms Mumford's $\jac$ and the Gauß composition law. 
\begin{theorem}
\label{maintheorem2}
The following diagram is commutative
\eqst{\xymatrix{\mathcal V_{N,\mathcal C}\ar[rr]^{ \eta} \ar[d]^{\mathcal B}&& \mathbb N^{2n}{/{\langle \sigma \rangle}} \ar[rr]^{\mathfrak I _q} \ar[d]^{\mathcal T^{\mathrm{trop}}} && \mathbb Q(q)^{2n}{/{\langle \sigma \rangle}} \ar[rr]^{\Psi_\mathcal C} \ar[d]^{\mathcal T} && \mathrm{Jac}(\mathcal C)/\langle\mathfrak{pa}\rangle \ar[d]^{\cdot \boxplus \mathfrak T} \\  \mathcal V_{N,\mathcal C}\ar[rr]^{ \eta}&& \mathbb N^{2n}{/{\langle \sigma \rangle}} \ar[rr]^{\mathfrak I _q} && \mathbb Q(q)^{2n}{/{\langle \sigma \rangle}} \ar[rr]^{\Psi_\mathcal C}&&\mathrm{Jac}(\mathcal C)/\langle\mathfrak{pa}\rangle }}
and composition of the horizontal maps is injective.
\end{theorem}

\begin{remark}
On the quotient $\jac / \langle \mathfrak{pa} \rangle$ all subtleties of the Gauß composition law for real hyperelliptic curves concerning the divisors at infinity disappear.
\end{remark}

\section{Integrality result and applications}
\noindent Obviously, one can now wonder whether the discrete periodic Toda flow has some integrality properties? Somewhat surprisingly we show that the answer is positive and leads, for example, to a new $p$-adic description of the periodic box-ball flow. This point of view will be studied thoroughly in follow-up work \cite{yalkinoglu}.
\begin{theorem}
\label{integrality}
The discrete periodic Toda flow induces a flow \eqst{\mathcal T_{\mathrm{integral}} : \mathbb Z[q]_{(q)}^{2n} \dashrightarrow \mathbb Z[q]_{(q)}^{2n},} which recovers the periodic box-ball flow using the $q$-adic valuation $\nu_q$. Here $\mathbb Z[q]_{(q)}$ denotes the localization of $\mathbb Z[q]$ at the ideal $(q)$, which is a local ring\footnote{In particular we have $q^{-1} \notin \mathbb Z[q]_{(q)}$.}.
\end{theorem}
\begin{proof}
For $(\bold{I}^t,\bold{V}^t) \in \mathbb Z[q]_{(q)}^{2n} $ we need to show that \eqst{\nu_q(I^{t+1}_i),\nu_q(V^{t+1}_i) \geq 0} for all $i \in \{1,\dots,n\}$, if the state $(\bold{I}^{t+1},\bold{V}^{t+1})$ is well-defined. This follows from the explicit formulas for the inverse Abel-Jacobi map in Proposition \ref{inversepsi}, as we have \eqst{\nu_q(I_{\Pi}),\nu_q(V_{\Pi}) \geq \nu_q(\sigma^k(u^{t+1}_x))} for all $k \in \{1,\dots,n\}$.
\end{proof}
\begin{corollary}
\label{padic}
The substitution $q \mapsto p$, for $p$ a (big enough) prime number, leads to the flow
\eqst{\mathcal T _p : \mathbb Z_{(p)}^{2n} \dashrightarrow \mathbb Z_{(p)}^{2n},} which recovers the periodic box-ball flow using the $p$-adic valuation $\nu_p$.
By continuity, we further obtain the flow \eqst{\mathcal T _p : \mathbb Z _p^{2n} \dashrightarrow \mathbb Z _p^{2n}.}
\end{corollary}
\begin{remark}
In particular we see that the discrete periodic Toda flow can be defined on (certain) local rings which are not fields! This integrality property seems to have been overlooked so far.
\end{remark}
\section{Outlook}
\subsection*{p-adic discrete periodic Toda flow}
Our integrality result allows to study the dynamics of the discrete periodic Toda system (and in particular the dynamics of the periodic box-ball system) using $p$-adic methods! In particular, in follow-up work we will study division polynomials for real models of hyperelliptic curves (Toda curves) over $\mathbb Z_p$. In this $p$-adic set-up many new tools can be used to tackle this problem, such as $p$-adic differential equations (see \cite{eid}) or the theory of ($p$-adic) formal group laws.
\subsection*{Berkovich spaces over $\mathbb Z$}
We finish our paper with the following somewhat speculative
\begin{question}
Is it possible to define the discrete periodic Toda flow on some (natural) Berkovich space defined over $\mathbb Z$?
\end{question}

\appendix
\section{Monodromy matrix approach}
In \cite{tokihiro1} a different approach to the computation of $h_n(x)$ and $f_n$ was given in terms of the so-called monodromy matrix \eqst{ M_n = \begin{bmatrix} m^1_{n+1} & m^1_n \\ m^2_{n+1} & m^2_n\end{bmatrix},} defined by the recursion (with index $j$ subject to the usual condition (\ref{periodicbc})) \eqst{m_{j} = (I_{j+1}+V_j +x) m_{j-1}- I_j V_j m_{j-2} } together with the two initial conditions \eqst{(m^1_1,m^1_0) = (1,0) \text{ and } (m^2_1,m^2_0) = (0,1),}
leading to the two relations \eqst{h_n(x) = \mathrm{tr}(M_n) =  m^1_{n+1}+m^2_{n} } and \eqst{f_n = \vert M_n \vert = m^1_{n+1}m^2_{n}-m^1_n m^2_{n+1}.}
The connection to our set-up is given by the next
\begin{lemma}
For $n \geq 2$, we have \begin{align*}(m^1_n,m^2_n) &= (u_n^{\sigma^{2-n}},-v_n^{\sigma^{2-n}}), \\
(m^1_{n+1},m^2_{n+1}) &= ((h_n+v_n)^{\sigma^{2-n}},(-I_nV_n u_n^\sigma)^{\sigma^{2-n}}).\end{align*}
\end{lemma}
\begin{proof}
We leave this as an exercise. 
\end{proof}

\section{Cantor's division polynomials}

\noindent We want to explain the following theorem due to Cantor
\begin{theorem}[\cite{cantordivision} Theorem 8.7]
\label{cantortheorem}
Let $\mathcal D: Y^2 = F(X)$ be an imaginary hyperelliptic curve with 
$F$ monic, $\mathrm{deg}(F) = 2g+1$ and $(a,b) \in \mathcal D$. Then, for $r \geq g+1$, there are explicit polynomials $\Delta_r(z),E_r(z)$ such that\eqst{r\cdot [X-x,y] = [\Delta_r(\tfrac{a-X}{4b^2},E_r(\tfrac{a-X}{4b^2})] \in \mathrm{Jac(\mathcal D)}.}
\end{theorem}
\noindent Roughly speaking the idea is to use the Padé approximants of $\sqrt{F(x)}$ for the construction of $\Delta_r(z)$ and $E_r(z)$.
\subsection{Discrete open Toda flow}
Set $E(z) = F(a-z)$ and consider the formal power series \eqst{S(z) = \sum_{i\geq 0} s_i z^i = (-1)^{g+1} \sqrt{E(z)}.} Define the Hankel determinants \eqst{h_{m,n} = (-1)^{\binom{n}{2}}\vert (s_{m+i-j})_{i,j=1}^n \vert.}
Then the $h_{m,n}$ satisfy the following fundamental relation (see \cite{gragg} Theorem 5.1), first discovered by Frobenius, namely \eq{\label{openToda}h_{m-1,n}h_{m+1,n}=h_{m,n-1}h_{m,n+1}+h_{m,n}^2.}
\begin{remark}
The relation (\ref{openToda}) is equivalent to the open discrete Toda equation in the $\tau$-function formalism, as explained in \cite{gragg,faybusovich}.
\end{remark}
\subsection{Cantor's division polynomials}
Following \cite{cantordivision}, let $r \in \mathbb N$, set $m_r = \lfloor \tfrac{r+g}{2} \rfloor ,n_r = \lfloor \tfrac{r-g-1}{2}\rfloor$ and define 
\eqst{f_r = \left\{ \begin{array}{cc}  h_{m_r,n_r} & ,r \geq g \\  \  0 & , r<g  \end{array} .\right.}
The polynomials $A^i_r(z)$, for $i \in \{1,2\}$, are defined by the recursion \eqst{f_{r-1}A^i_{r+1}(z) = f_r A^i_r(z)-zf_{r+1}A^i_{r-1}(z)} with initial conditions \eqst{(A^1_{g-1},A^1_g) = (-z^{g-1},-z^g) \text{ and } (A^2_{g-1},A^2_g) = (0,0).}
The polynomials $A^i_r(z)$ are Padé approximants of $\sqrt{E(z)}$.
Further, for $r\geq g+1$, we define \eqst{D_r(z) = -(A^1_r(z)^2 - A^2_r(z)^2 E(z))/z^r} and \eqst{\tilde E (z) = \tfrac{z f_{r-1}f_{r+1}}{2 f_r^2}(\tfrac{D_{r+1}(z)}{f_{r+1}^2}-\tfrac{D_{r-1}(z)}{f_{r-1}^2}) \text{ mod } D_r(z).}
Finally, we can define the polynomials appearing in Cantor's theorem by \eqst{\Delta_r(z) = (2b)^{2\nu_r} D_r(4b^2 z) \text{ and } E_r(z) = \tilde E_r(4 b^2 z),} where $\nu_r = \binom{r}{2}-\binom{g}{2}.$

\subsection{Back to our set-up}
In light of Theorem \ref{lineartime}, the discrete periodic Toda flow is essentially reduced to understand (for $t \in \mathbb N$) \eqst{t \cdot \mathfrak T \in \jac.} Of course, this can be calculated via the Gauß composition law, but Cantor's division polynomials are even more efficient (cf., \cite{cantordivision}). On the imaginary model $\mathcal C _\alpha$ the element $\mathfrak T$ becomes \eqst{\mathfrak T _\alpha = [x_\alpha-\tfrac 1 \alpha, \tfrac{V_{\Pi}-I_{\Pi}}{\beta (-\alpha)^{g+1}}] \in \mathrm{Jac}(\mathcal C _\alpha)} to which we can apply Cantor's division polynomials (for $t > g$). 

\begin{remark}
As far as the author knows, there is surprisingly no direct (algebraic) approach to division polynomials for real hyperelliptic curves in the literature. Interestingly, $p$-adic methods might pave the way to such a theory.
\end{remark}

\section{Tropical Mumford Jacobian}
\label{tropicalmumford}
\noindent We continue to work with the lift $\mathfrak I _q : \mathbb N^{2n} \longrightarrow \mathbb Q(q)^{2n}$ defined in (\ref{q-lift}). For background on tropical polynomials we refer to \cite{nobe}.
\begin{definition}
Given $F(x) = \sum_{i=0}^n f_i x^i \in \mathbb Q(q)[x]$, we define its tropical polynomial by \eqst{F^{\mathrm{trop}}(x) = \mathrm{min}\{\nu_q(a_0),\nu_q(a_1)+x,\dots,\nu_q(a_n) + nx\}.}
\end{definition}
\noindent From Lemma \ref{subfree} we know that the polynomials describing the Mumford representations $[u,v]$ of $\jac$ are all subtraction-free. This leads to
\begin{definition}
Define the tropical Mumford Jacobian to be \eqst{\jac^{\mathrm{trop}} = \{([u^{\mathrm{trop}},v^{\mathrm{trop}}],n) \ \vert \ ([u,v],n)\in \jac\}} and the tropical isolevel set via \eqst{\mathcal R_\mathcal C^{\mathrm{trop}} = \{ \nu_q((\bold{I},\bold{V})) \vert (\bold{I},\bold{V}) \in \mathcal R _{\mathcal C} \} \subset \mathbb N^{2n}.} 
\end{definition}

\begin{proposition}[Tropical inverse Abel-Jacobi map]
There is a natural tropicalization \eqst{(\Psi^{-1}_\mathcal C)^{\mathrm{trop}} : \mathrm{Im}(\Psi_\mathcal C)^{\mathrm{trop}} \longrightarrow \mathcal R_\mathcal C ^{\mathrm{trop}}} of the algebraic inverse Abel-Jacobi map from Proposition \ref{inversepsi} such that the following diagram 
\eqst{\xymatrix{ \mathcal R _\mathcal C \ar[rr]^{\Psi_\mathcal C^{-1}}  \ar[d]^{{}^\mathrm{trop}} && \mathrm{Im}(\Psi_\mathcal C) \ar[d]^{{}^\mathrm{trop}} \\  \mathcal R _\mathcal C ^{\mathrm{trop}} \ar[rr]^{(\Psi^{-1}_{\mathcal C})^{\mathrm{trop}} }&&  \mathrm{Im}(\Psi_\mathcal C) ^{\mathrm{trop}} }}
commutes.
\end{proposition}
\begin{proof}
Using Lemma \ref{subfree} it is immediate that the explicit formulas from Proposition \ref{inversepsi} can be tropicalized such that the diagram commutes.
\end{proof}
\begin{proposition}[Linearization of tropical Toda flow]
We can define a map \eqst{\boxplus \mathfrak T ^{\mathrm{trop}} : \jac^{\mathrm{trop}} \longrightarrow \jac ^{\mathrm{trop}}} such that the following diagram is commutative
\eqst{\xymatrix{ \jac \ar[d]^{\cdot \boxplus \mathfrak T} \ar[rr]^{{}^{\mathrm{trop}}} && \jac^{\mathrm{trop}} \ar[d]^{\boxplus \mathfrak T ^{\mathrm{trop}}}\\  \jac \ar[rr]^{{}^{\mathrm{trop}}} &&\jac^{\mathrm{trop}}.} }
\end{proposition}
\begin{proof}
For this we exploit again Lemma \ref{subfree} and use transport of structure.
\end{proof}
\begin{remark}
It should be interesting to investigate the Gauß composition law and the theory of division polynomials in the framework of tropical curves and Jacobians. We do not pursue this direction in this paper.
\end{remark}

\bibliographystyle{plain}
\bibliography{lineardpT_v2}

\end{document}